\pgfplotsset{compat=1.14}
\theoremstyle{plain}\newtheorem{thm}{Theorem}[section]
\theoremstyle{plain}\newtheorem{defn}[thm]{Definition}
\theoremstyle{definition}\newtheorem{eg}[thm]{Example}
\theoremstyle{plain}
\theoremstyle{plain}
\theoremstyle{definition}
\theoremstyle{definition}
\theoremstyle{definition}
\theoremstyle{definition}
\theoremstyle{plain}\newtheorem{alg}[thm]{Algorithm}
\theoremstyle{plain}\newtheorem{rmk}[thm]{Remark}
\theoremstyle{plain}\newtheorem{lem}[thm]{Lemma}
\theoremstyle{plain}
\theoremstyle{plain}
\newcommand{\pp}{\mathbb{P}}
\newcommand{\qq}{\mathbb{Q}}
\newcommand{\zz}{\mathbb{Z}}
\newcommand{\ff}{\mathbb{F}}
\newcommand{\nn}{\mathbb{N}}
\newcommand{\FF}{\mathbb{F}}
\newcommand{\fp}{\mathfrak{p}}
\newcommand{\comment}[1]{}
\newcommand{\Div}{\operatorname{div}}
\newcommand{\Gal}{\operatorname{Gal}}
\newcommand{\Aut}{\operatorname{Aut}}
\newcommand{\SL}{\operatorname{SL}}
\newcommand{\ord}{\operatorname{ord}}
\newcommand{\Sym}{\operatorname{Sym}}
\newcommand{\Van}{\text{Van}}
\newcommand{\p}{^\prime}
\newcommand{\tors}{\operatorname{tors}}
\renewcommand{\d}[1]{\ensuremath{\operatorname{d}\!{#1}}}
\newcommand{\defi}[1]{\textsf{#1}}
\title{Chabauty--Coleman computations on rank 1 Picard curves}
\author{Sachi Hashimoto and Travis Morrison}
\date{}
\address{Sachi Hashimoto, Department of Mathematics and Statistics, Boston University, 111 Cummington Mall, Boston, MA 02215, USA}
\thanks{SH was supported by a Clare Boothe Luce Fellowship (Henry Luce Foundation) and NSF Graduate Research Fellowship grant number DGE-1840990. TM was supported by funding from the Natural Sciences and Engineering Research Council of Canada, the Canada First Research
Excellence Fund, CryptoWorks21, Public Works and Government Services
Canada, and the Royal Bank of Canada.}
\email{svh@bu.edu}
\address{Travis Morrison, Mathematics Department, Virginia Tech, 554 McBryde Hall,
225 Stanger Street,
Blacksburg, VA 24061}
\email{tmo@vt.edu}
\begin{document}

\begin{abstract}
We provably compute the full set of rational points on 1403 Picard curves defined over $\qq$ with Jacobians of Mordell--Weil rank $1$ using the Chabauty--Coleman method. To carry out this computation, we extend Magma code of Balakrishnan and Tuitman for Coleman integration. The new code computes $p$-adic (Coleman) integrals on curves to points defined over number fields where the prime $p$ splits completely and implements effective Chabauty for curves whose Jacobians have infinite order points that are not the image of a rational point under the Abel--Jacobi map. We discuss several interesting examples of curves where the Chabauty--Coleman set contains points defined over number fields.
\end{abstract}

\maketitle

\section{Introduction}

Let $X/\qq$ be a smooth projective curve of genus at least $2$. Faltings's theorem \cite{Faltings}, proved in 1983, establishes that the set of rational points $X(\qq)$ of $X$ is finite, but falls short of giving an explicit method for computing this set. Our goal in this paper is to compute this set explicitly for certain curves of genus $3$.

Let $p$ be a prime of good reduction for $X$. We focus on curves amenable to Chabauty--Coleman calculations, which use $p$-adic (Coleman) integrals to carve out a finite set of $p$-adic points containing the rational points. The Chabauty--Coleman method is one attempt at an effective Faltings's theorem. The method dates back to work of Chabauty in 1941 \cite{Chabauty}, who showed finiteness of $X(\qq)$ for certain curves $X$ using the embedding of $X(\qq)$ into the $p$-adic closure of $J(\qq)$, where $J$ is the Jacobian variety of $X$. In the 1980s, Coleman \cite{EffectiveChab,Torsion} made Chabauty's approach effective with the development of the Coleman integral. With this computational tool, he was able to bound the number of rational points on some curves.

For a given curve $X$, the Chabauty--Coleman method requires that the Mordell--Weil rank of its Jacobian be computable and smaller than the genus of $X$. In this paper we apply the Chabauty--Coleman method to genus 3 non-hyperelliptic curves. While there exist techniques to compute the rank of Jacobians of general smooth plane quartics \cite{BruinPoonenStoll}, practical implementations of rank-bounding techniques only exist for superelliptic curves, based on work of Poonen and Schaefer \cite{PoonenSchaefer}. A \defi{Picard curve} is a smooth plane quartic $X/\qq$ that admits a Galois cover of degree $3$ of $\pp^1$. Every Picard curve has an affine equation of the form $y^3 = f(x)$ where $f(x) \in \qq[x]$ is degree $4$ and without multiple roots in $\overline{\qq}$. Therefore we focus on Picard curves. While for some particular genus $3$ curves $X$ of Mordell--Weil rank $2$ it is possible to determine the rational points using the Chabauty--Coleman method, and one can always extend the method using the Mordell--Weil sieve \cite{SiksekMordell,BruinStollMW}, which combines $p$-adic information for multiple primes $p$, when the rank is one less than the genus we expect the Chabauty--Coleman method alone to fail to determine $X(\qq)$ for most $X$. So, in particular, we focus on the rank $1$ case, where we can work with at least two linearly independent functionals on $J(\qq_p)$.

Recently, Balakrishnan and Tuitman \cite{balatuitman} have made Coleman integration explicit and practical, opening up the ability to compute rational points on entire databases of curves satisfying Chabauty's rank hypothesis.

Sutherland has computed a database of genus 3 curves of bounded discriminant \cite{SutherlandNonHyp, g3database}, providing the necessary data for large-scale computational experiments in genus 3. Balakrishnan--Bianchi--Cantoral-Farf\'an--{\c{C}}iperiani--Etropolski \cite{CCExp} computed rational points on a database of genus 3 hyperelliptic curves with Mordell--Weil rank 1 using Chabauty--Coleman methods and de Frutos-Fern\'andez--Hashimoto \cite{MariaSachi} computed rational points on genus 3 hyperelliptic curves with Mordell--Weil rank 0. This work is the first instance of such computations being carried out on non-hyperelliptic curves.

The Chabauty--Coleman method cuts out a finite set of $p$-adic points $X(\qq_p)_1$ on a curve $X$ that contains the rational points $X(\qq)$; sometimes this set is strictly larger than the set of rational points.

For every Picard curve $X$ we consider, we classify the set of extra points $X(\qq_p)_1 \setminus X(\qq)$ in the Chabauty--Coleman set. We  show that each extra point $P$ is \defi{defined over a number field}, that is, we can find a number field $K$ where $p$ is divisible by a prime $\mathfrak{p}$ of degree one dividing $p$, and $P \in X(K) \cap X(K_\mathfrak{p})$. Let $\infty \in X(\qq)$ be the unique rational point at infinity. These extra points fall into one of three categories:

\begin{itemize}
\item $P \in X(K)$ such that $[P-\infty]$ is in $J(K)_{\tors}$;
\item $P \in X(K)$ such that there exists $n \in \zz$ so that $n[P-\infty]$ is infinite order in $J(\qq)$;
\item $P \in X(K)$ that appear due to the existence of extra automorphisms of $X_{\overline{\qq}}$.
\end{itemize}
We exhibit examples of all three possibilities in Section \ref{examples}.

Our Magma code extending the functionality of Balakrishnan and Tuitman's Coleman integration code as well as our code for computing $X(\qq_p)_1$ on the database of Picard curves is available at ~\cite{TravisSachiextensions, TravisSachiexperiments}.

\section{The Chabauty--Coleman method}

\subsection{Chabauty--Coleman}
In this section we introduce the Coleman integral as a $p$-adic tool for computing the rational points on a curve. We follow the exposition of the construction of the Coleman integral in~\cite{Wetherell} and \cite{McCallumPoonen}. Let $X$ be a smooth projective curve of genus $g>1$ over $\qq$ with Jacobian $J$. Let $B$ be a nonzero rational divisor on $X$ inducing an Abel--Jacobi map
\begin{align*}
\iota\colon &X \rightarrow J  \\
&Q \mapsto [\deg(B)Q - B]
\end{align*}
 (for example we can always take a canonical divisor or we can take a rational point if one is known). Let $p$ be a prime of good reduction for $X$. We introduce two ways to construct the $p$-adic Coleman integral of a regular one-form on $X_{\qq_p}$: one by pulling back an integral on $J_{\qq_p}$ thought of as a $p$-adic Lie group and the other directly on $X_{\qq_p}$ as a rigid analytic space. We first describe the $p$-adic Lie group approach and do not address the equivalence here; Coleman discusses the details in \cite{Torsion}.

Let $\omega \in H^0(J_{\qq_p}, \Omega^1)$.
Since $J(\qq_p)$ is a $p$-adic Lie group, the differential $\omega$ is translation-invariant, and we can define an antiderivative homomorphism $\int_0: J(\qq_p)\to \qq_p$. This homomorphism sends a point $Q \in J(\qq_p)$ to $\int_0^Q \omega$. For any open subgroup $U \subset J(\qq_p)$, the antiderivative of $\omega$ can be computed by expanding $\omega$ in local coordinates at $Q \in U$ and taking formal antiderivative in the power series ring; this yields a convergent $p$-adic power series on small enough open subgroups.

 The pullback map
\[
 \iota^* \colon H^0(J_{\qq_p}, \Omega^1) \to H^0(X_{\qq_p}, \Omega^1)
 \]
  induced by $\iota$ is an isomorphism, independent of the choice of $B$~\cite[Proposition~2.2]{MilneJacobian}. This gives us a way to think of the integral on the curve $X_{\qq_p}$: for any $\omega \in H^0(X_{\qq_p}, \Omega^1)$ and any $P, Q\in X(\qq_p)$, we can define $\int_P^{Q}\omega$ by $\int_0^{[Q-P]} (\iota^*)^{-1}\omega$. Conversely, we can write an integral on $J_{\qq_p}$ in terms of integrals on $X_{\qq_p}$: for any $D \in J(\qq_p)$ and $\omega \in H^0(J_{\qq_p}, \Omega^1)$, we can write $D = \sum_i [Q_i- P_i]$ where $P_i \in X(L)$ for $L / \qq_p$ a finite extension. Then \[\int_0^D  \omega = \sum_i \int_{P_i}^{Q_i} \iota^*\omega.\] Note $\int_0^D \omega =0$ when $D\in J(\qq_p)_{\tors}$ because the antiderivative is a homomorphism.

 Coleman gives a definition of the integral $\int_P^Q \eta$ on the rigid analytic space $X_{\qq_p}$; to do so, we work over a wide open space $V \subset X_{\qq_p}$ constructed by deleting a finite number of closed disks from $X_{\qq_p}$ of radius less than 1. He defines an integral for $P, Q \in V$ and $\eta$ any differential $\omega$ of the second kind on $V$ \cite{Torsion} (a differential $\omega$ is \defi{of the second kind} if it is everywhere meromorphic and the residue at every pole is zero). Coleman shows his definition agrees with the one induced by the $p$-adic Lie group structure of $J_{\qq_p}$ when $\eta \in H^0(X_{\qq_p}, \Omega^1)$. Fix an algebraic closure $\overline{\qq}_p$ of $\qq_p$. For all points $P, Q, R \in V(\overline{\qq}_p)$ and one-forms $\omega, \eta$ of the second kind on $X_{\qq_p}$, the Coleman integral enjoys the following properties:

  \begin{thm}(Coleman)
  \label{thm:properties}
\begin{enumerate}
	\item Linearity: $\int_P^{Q}(\alpha\omega +\beta\eta)=\alpha\int_P^{Q}\omega + \beta\int_P^{Q}\eta$, for all $\alpha, \beta \in \qq_p$.
	\item Additivity: $\int_{P}^Q \omega = \int_P^{R} \omega + \int_{R}^Q \omega$.
	\item Fundamental theorem of calculus:
	 $\int_P^Q \d f = f(Q) - f(P)$ for $f$ a rigid analytic function on $V$.
	\item Change of variables: if $V\p \subset X\p$ is another wide open subspace of a rigid analytic space $X\p$, for any rigid analytic map $\phi\colon V\to V\p$, we have $\int_{\phi(P)}^{\phi(Q)}\omega = \int_P^{Q}\phi^*(\omega)$.
	\item
	\label{jacobianprop} For any divisor $D \coloneqq \sum_i Q_i- P_i$ of degree zero on $X$ defined over $\overline{\qq}_p$, and for $\omega$ a regular one-form, then $\int_D \omega \colonequals \sum_i \int_{P_i}^{Q_i} \omega $ is well-defined, and $\int_D \omega = 0$ when $D$ is principal.
	\item
	\label{galoisprop} Galois equivariance: if $\sigma \in \Gal(\overline{\qq}_p/\qq_p)$, then $\left(\int_P^Q \omega\right)^\sigma = \int_{\sigma(P)}^{\sigma(Q)} \omega^\sigma.$
	\label{item:sumD}
\end{enumerate}

\end{thm}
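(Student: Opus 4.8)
The plan is to reduce all six properties to the single construction underlying the integral: for every one-form $\omega$ of the second kind on the wide open $V$, Coleman produces a locally analytic primitive $F_\omega$ on $V(\overline{\qq}_p)$, meaning a locally analytic function with $dF_\omega = \omega$, and then defines $\int_P^Q \omega \colonequals F_\omega(Q) - F_\omega(P)$. On a single residue disk such a primitive exists by formally integrating the local power series expansion of $\omega$, but it is only determined up to a locally constant function; the substance of the construction is that the constants on the various disks are pinned down uniquely by requiring $F_\omega$ to be compatible with the pullback by a lift of Frobenius acting on the (finite-dimensional) de Rham cohomology. I would take this Frobenius-equivariant characterization as given and derive the six properties from the uniqueness it supplies.

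First, linearity (1) is immediate: $\alpha F_\omega + \beta F_\eta$ is a Frobenius-equivariant primitive of $\alpha\omega + \beta\eta$, so by uniqueness it agrees with $F_{\alpha\omega+\beta\eta}$ up to an additive constant, which cancels in the difference defining the integral. Additivity (2) is formal telescoping, since for fixed $\omega$ the value $\int_P^Q \omega = F_\omega(Q) - F_\omega(P)$ arises from one global function. The fundamental theorem of calculus (3) follows because when $\omega = df$ with $f$ rigid analytic, $f$ is itself a (Frobenius-equivariant) primitive, so $F_{df} = f$ up to a constant. For change of variables (4), I would verify that $\phi^* F_\omega$ is a locally analytic primitive of $\phi^*\omega$, using $d(\phi^* F_\omega) = \phi^*(dF_\omega) = \phi^*\omega$, and that it stays Frobenius equivariant because the Frobenius lifts on $V$ and $V\p$ may be chosen compatibly with $\phi$; uniqueness then gives $F_{\phi^*\omega} = \phi^* F_\omega$ up to a constant, hence the asserted equality after differencing.

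The remaining two properties carry the real weight. For the Jacobian property (5), I would invoke the comparison between the rigid analytic integral and the $p$-adic Lie group integral on $J_{\qq_p}$. Because $\iota^*\colon H^0(J_{\qq_p},\Omega^1)\to H^0(X_{\qq_p},\Omega^1)$ is an isomorphism, every regular $\omega$ on $X$ is $\iota^*$ of a translation-invariant form on $J$, and summing the curve integrals over the points of a degree-zero divisor $D$ recovers $\int_0^{[D]}(\iota^*)^{-1}\omega$, where $[D]\in J(\overline{\qq}_p)$ is the class of $D$. Since the antiderivative on $J$ is a homomorphism, this value depends only on $[D]$, which yields well-definedness (independence of the grouping into $Q_i - P_i$) and forces $\int_D\omega = 0$ whenever $D$ is principal, as then $[D]=0$. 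Galois equivariance (6) follows from the canonicity of the construction: the wide open $V$, the Frobenius lift, and hence $F_\omega$ are defined over $\qq_p$, so applying $\sigma \in \Gal(\overline{\qq}_p/\qq_p)$ sends a Frobenius-equivariant primitive of $\omega$ to one of $\omega^\sigma$, and uniqueness gives $(F_\omega(Q))^\sigma = F_{\omega^\sigma}(\sigma(Q))$.

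The main obstacle is not any of these bookkeeping steps but the existence and uniqueness of the Frobenius-equivariant primitive $F_\omega$ itself, together with the compatibility in (5) between the rigid analytic integral and the Lie group integral on the Jacobian; these are precisely the technical core of Coleman's theory, deferred in the excerpt to \cite{Torsion}. In practice I would cite Coleman for this core and verify only the formal consequences (1)--(4) and (6) directly.
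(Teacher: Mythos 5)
The paper does not actually prove this theorem: it is stated as Coleman's theorem, with the construction and all six properties cited to \cite{Torsion} (and \cite{EffectiveChab}), so there is no in-paper argument to compare against. Your proposal is a reasonable outline of how Coleman's theory delivers these properties: the integral comes from a locally analytic primitive $F_\omega$ pinned down up to a constant by compatibility with a Frobenius lift acting on cohomology; properties (1)--(3) and (6) follow from uniqueness of that primitive; and (5) follows from Coleman's comparison between the rigid analytic integral and the antiderivative homomorphism on $J(\qq_p)$, which is exactly the comparison the surrounding text of the paper invokes. Identifying the existence/uniqueness of $F_\omega$ and the Jacobian comparison as the non-formal core to be cited is the right division of labor, and matches how the paper treats the result (as a black box).

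One step of your plan is genuinely wrong as stated, however: for change of variables (4) you propose to choose the Frobenius lifts on $V$ and $V'$ ``compatibly with $\phi$''. This cannot be done for a general rigid analytic map $\phi$ --- already for automorphisms of a curve, $\phi$ need not commute with any Frobenius lift, even modulo $p$; the automorphisms exploited later in the paper (e.g.\ $x \mapsto ix$ in Example \ref{endoexample1}, defined only over an extension of $\qq$) illustrate the issue. Coleman's functoriality is a theorem with real content: it rests on showing the integral is independent of the choice of Frobenius lift and of the wide open, proved by analytic continuation along Frobenius (the ``Dwork trick''), not by matching lifts across $\phi$. So (4) should be moved into the cited technical core alongside existence/uniqueness and the Jacobian comparison, rather than treated as a formal consequence you can verify directly; with that adjustment your outline is a correct reduction of the theorem to the parts of \cite{Torsion} that genuinely must be cited.
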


\begin{rmk}
Let $Q_i, P_i \in \overline{\qq}_p$ such that $D \colonequals \sum_{i} Q_i - P_i $ is a divisor of degree zero on $X$ defined over $\qq_p$. Let $\omega \in H^0(X_{\qq_p}, \Omega^1)$. By (\ref{jacobianprop}) and (\ref{galoisprop}) of Theorem \ref{thm:properties}, the integral \[\int_{D} \omega = \sum_i \int_{P_i}^{Q_i} \omega \]
is well-defined, only depends on $[D]$, and belongs to $\qq_p$, even though the individual integrals in the sum $\int_{P_i}^{Q_i} \omega$ may not belong to $\qq_p$.

\end{rmk}

When the rank $r$ of $J(\qq)$ is less than the genus $g$ of $X$, the space of \defi{vanishing differentials}
\[
\Van(X(\qq))\coloneqq \left \{\omega \in H^0(X_{\qq_p}, \Omega^1)\colon \int_D  \omega = 0 \text{ for all } D \in J(\qq)\right\}
\] is at least $(g -r)$-dimensional, and thus nonzero. In particular, when $r = 1$, the space of vanishing differentials is exactly $(g-1)$-dimensional: for $D \in J(\qq)$ the integrals $\int_0^D\omega_i$ on a basis of regular differentials $i= 1, \dots, g$ are zero if and only if $D$ is torsion \cite[III, \S7.6, Proposition~12]{Bourbaki}. Since we are guaranteed some $D \in J(\qq)$ which is non-torsion, $\Van(X(\qq))$ cannot be all of $H^0(X_{\qq_p}, \Omega^1)$.

\begin{rmk}

We will think of the $p$-adic integral of $\omega$ in $\Van(X(\qq))$ as the Coleman integral $\int_D \omega \colonequals \sum_i \int_{P_i}^{Q_i} \omega $ taking place directly on the curve $X_{\qq_p}$ as a rigid analytic space (as in Theorem \ref{thm:properties} part \ref{item:sumD}). While we could consider both the $p$-adic Lie group integral $\int_0^D (\iota^*)^{-1}\omega$ on the Jacobian and the Coleman integral when $\omega \in H^0(X_{\qq_p}, \Omega^1)$, in the process of explicitly computing the Coleman integral between two points in different residue disks, we compute integrals of differentials of the second kind that are not everywhere regular (see Section \ref{explicitint}). Therefore throughout the paper we consider all $p$-adic integrals to take place on the curve as a rigid analytic space.

\end{rmk}

Coleman \cite{EffectiveChab} proves a bound on the number of rational points $\# X(\qq) \leq \#X(\ff_p) + 2g-2$ by bounding the number of zeros of a nonzero differential in $\Van(X(\qq))$, assuming $p > 2g$.

Coleman's proof suggests an algorithm for computing $X(\qq)$ by computing the set
\[
X(\qq_p)_1 \coloneqq \left \{Q \in X(\qq_p): \int_D^{\deg(D)Q} \omega = 0\text{ for all } \omega \in \Van(X(\qq))\right \}.
\]
Since the Coleman integral is locally analytic, it can be expressed as the integral of a convergent $p$-adic power series on each residue disk. Computing $X(\qq_p)_1$ yields finitely many solutions in each disk, and so finitely many in total. Therefore $X(\qq)\subseteq X(\qq_p)_1$ is finite.

\subsection{Explicit Coleman integration after Balakrishnan and Tuitman}
\label{explicitint}

To carry out the Chabauty--Coleman method in practice, we need an explicit method for computing the set $X(\qq_p)_1$, which is defined as the vanishing locus of some $p$-adic integrals. In this section, we give an overview of a practical algorithm to compute $p$-adic integrals on curves, following the method of Balakrishnan and Tuitman \cite{balatuitman}. For a more detailed description of the algorithm, which applies in more generality than described here, we refer the reader to their paper, as well as the zeta function algorithm of Tuitman described in \cite{Tuitman1,Tuitman2} which develops the reduction in cohomology, extending an earlier algorithm for hyperelliptic curves due to Kedlaya \cite{Kedlaya,KedlayaErrata}.

Let $X/\qq$ be a smooth projective geometrically integral curve with (possibly singular) affine plane model given by the equation $Q(x,y)=0$ with $Q(x,y) \in \zz[x,y]$ and let $p$ be a prime of good reduction. Tuitman's algorithm exploits the existence of a low-degree map $x:X \to \pp^1$ of degree $d_x$. After removing the ramification locus of this map, the algorithm chooses a lift of Frobenius which sends $x \mapsto x^p$ and Hensel lifts $y$. Then, we compute the action of Frobenius on a certain basis of differentials whose classes generate $H^1_{rig}(X_{\qq_p})$, and reduce using Lauder's fibration method which is given by explicit and fast linear algebra.

To specify a point $P$ on $X$, Balakrishnan and Tuitman's algorithm uses the following data:

\begin{itemize}

\item $\tt{ P `x}$ the $x$-coordinate of $P$ (or $0$ if $x$ is infinite).

\item $\tt{ P` b}$ the values of $b^0= [b^0_0, \dots, b^0_{d_x-1}]$ that form an integral basis for the function field $\qq(X)/ \qq[x]$ at $x$, ($ b^\infty = [b^\infty_0, \dots, b^\infty_{d_x-1}]$ an integral basis for $\qq(X)/ \qq[1/x]$ if $P$ is infinite).

\item $\tt{ P`inf }$ a boolean value specifying whether $P$ is infinite.

\end{itemize}

We must impose some additional assumptions on $Q(x,y)$, $p$, $b^0$, and $b^\infty$, which are listed in \cite[Assumption~2.6]{balatuitman}, in order to compute $p$-adic Coleman integrals on $Q(x,y)=0$ using Balakrishnan and Tuitman's algorithm. These hypotheses can be verified directly for our case of Picard curves from the computation of $b^0$ and $b^\infty$ provided here.

In the case where $X$ is a monic Picard curve $y^3=f(x)$, the map $x:X\to \pp^1$ is projection to the $x$-coordinate, $b^0 =[1,y,y^2] $, $b^\infty=[y, y/x^2, y/x^3]$. The regular differentials $\omega_1 = y\d x/f, \omega_2 = xy\d x/f$ and $\omega_3 = y^2 \d x/f$ form a basis of $H^0(X_{\qq_p},\Omega^1)$. We extend this basis by $\omega_4 = x^3 y \d x / f, \omega_5=xy^2 \d x / f, \omega_6 = x^2 y^2 \d x / f$ to obtain classes $[\omega_1], \dots, [\omega_6]$ that generate $H^1_{rig}(X_{\qq_p})$.

The $p$-adic points $X(\qq_p)$ are a disjoint union of \defi{residue disks}, which are the preimages under reduction modulo $p$ of points in $X(\ff_p)$. After Balakrishnan and Tuitman, we classify the residue disks based on their ramification with respect to the map $x: X\to \pp^1.$ Let $\Delta_y(x)$ be the discriminant of $Q$ thought of as a polynomial in $y$, and $r(x)$ be the squarefree polynomial with the same zeros as $\Delta_y(x)$ given by $r(x) = \Delta_y(x)/\gcd(\Delta_y(x), \frac{d \Delta_y(x) }{d x})$. Then our residue disks partition as follows:

\begin{itemize}

\item \defi{Bad residue disks}  are residue disks that contain a point whose $x$-coordinate is a zero of $r(x)$ or is infinite. Such points are called \defi{very bad points}. If the very bad point of a disk has infinite $x$-coordinate, the disk is also an \defi{infinite residue disk}, and the point is also a \defi{very infinite point}. Otherwise the disk is a \defi{finite residue disk}.
\item \defi{Good residue disks} comprise the remaining residue disks. All good residue disks are finite.

\end{itemize}

Note that each residue disk contains at most one very bad point $P=(x,y)$ that will be defined over an unramified extension of $\qq_p$ \cite[Remark~2.9]{balatuitman}. We denote by $e_P$ the ramification index of the map by $x$ at a very bad point $P$. If $Q(x,y) = y^3 - f(x)$ is a Picard curve, then $r(x) = -27f(x)$, the very bad points are the ramification points (where $y=0$), the point at infinity $\infty$ is $[0:1:0]$, the residue disks containing ramification points are the finite bad disks, and all very bad points have ramification index $3$.

There are two kinds of Coleman integrals on $X$: integrals between two points $P$ and $Q$ in $X (\qq_p)$ in the same residue disk, called \defi{tiny Coleman integrals} and integrals between two points that are not in the same residue disk. Tiny integrals are computed using uniformizing parameters. A uniformizing parameter gives an isomorphism from a residue disk to $p \zz_p$ and transforms a Coleman integral into a power series integral.

\begin{alg} Compute a tiny integral $\int_P^Q \omega$ and uniformizing parameter $t$
\label{tinyintegrals}

Input: $P$ and $Q \in X(\qq_p)$ in the same residue disk, and $\omega$ a differential of the second kind on $X$ such that $\omega$ does not have a pole in the residue disk of $P$ and $Q$.

Output: $\int_P^Q \omega$ and $t$ a uniformizing parameter in the residue disk of $P$ and $Q$

\begin{enumerate}

\item If $P, Q$ are in a bad disk, find $P\p$, the very bad point in the disk, and break up the integral as $\int_P^{P\p} \omega + \int_{P\p}^Q \omega$. Otherwise proceed with $P = P\p$.

\item  Compute a uniformizing parameter $t$ at $P\p$: in a good disk, $t$ is given by $x- x(P\p)$ ; at a very bad point $t=b_i^0-b_i^0(P\p)$ for some $i$ (replace with $b_i^\infty$ for very infinite $P\p$).

\item Compute $x$ and the $b^0$ vector (or $b^\infty$) as a function of $t$ to desired precision, and using these expansions compute $\omega$ as a power series in $t$.

\item Integrate \[ \int_P^Q \omega = \int_{t(P)}^{t(Q)} \omega(t)\] as a power series.

\end{enumerate}

\end{alg}

For more details on the computations of the tiny integral and the uniformizing parameters see \cite[Algorithm~3.7, Proposition~3.2]{balatuitman}.

For the second kind of Coleman integral, between points in different residue disks, we turn to the full machinery of Tuitman's zeta function algorithm to compute the action of Frobenius on cohomology. Let $P$ and $Q$ in $X(\qq_p)$ be points in different good residue disks. We use Tuitman's lift $\phi$ of the Frobenius morphism on $X_{\ff_p}$. Let $\omega_i$ be Tuitman's basis differentials such that $[\omega_i]$ generate $H^1_{rig}(X_{\qq_p})$ for $i = 1, \dots, 2g$. By applying Frobenius to $P$ and $Q$, we obtain\[\int_{P}^Q \omega_i = \int_P ^{\phi(P)} \omega_i + \int_{\phi(P)}^{\phi(Q)} \omega_i + \int _{\phi(Q)}^{Q} \omega_i =\int_P ^{\phi(P)} \omega_i + \int_{P}^{Q} \phi^*\omega_i + \int _{\phi(Q)}^{Q} \omega_i  \] where $P$ and $ \phi(P)$ are in the same residue disk, so define a tiny integral, and similarly for $Q$ and $\phi(Q)$.

So, we expressed the unknown integral as a linear combination of two computable quantities and the integral of $\phi^* \omega_i$. This motivates computing $\phi^*\omega_i$; Balakrishnan and Tuitman compute the reduction in cohomology of $\phi^* \omega_i$ as a sum of basis elements $\sum_j M_{ij} \omega_j$ plus an exact differential $\d f_i$. We compute the reductions of all $\phi^* \omega_i$ and consider the linear system of equations to compute the Coleman integral. For more details see \cite[Algorithm~3.7, Algorithm~3.12]{balatuitman} as well as \cite[Section~4.1]{Tuitman2} which discusses how to construct the $2g$ basis differentials whose classes generate $H^1_{rig}(X_{\qq_p})$. If $P$ lies in a bad disk, some $f_i$ might not converge at $P$, but near the boundary of the disk of $P$ they do; we compute the integral $\int_P^Q \omega$ by taking $S \in X(\qq_p(p^{1/e}))$ on the boundary for $e$ large enough and breaking up the integral as $\int_P^S \omega + \int_S^Q\omega$.

In practice, to compute $X(\qq_p)_1$, we integrate to arbitrary endpoints $\int_b^z \omega$; to do this we iterate over residue disks, breaking up the integral by fixing $P$ in each residue disk and integrating $\int_b^P \omega + \int_P^z \omega$ where $\int_b^P \omega $ sets the constant of integration and $\int_P^z \omega$ is a tiny integral which can be expanded as a $p$-adic power series to arbitrary precision and whose zeros can be found using analytic techniques. Balakrishnan and Tuitman implement the Coleman integral as well as the computation of the Chabauty--Coleman set $X(\qq_p)_1$ in Magma \cite{BalaTuitgit} under the assumption that we can find rational points $P_1, P_2, \dots, P_r \in X(\qq)$ such that the classes $[P_i - b]$ are infinite order and linearly independent in $J(\qq)$, where $r$ is the rank of $J(\qq)$. However, their implementation does not check the precision (for example, when solving for roots of $p$-adic power series the code does not check whether all of the roots obtained are single roots) and needs a few modifications to produce provably correct results. Also, their function for solving for zeros of a power series in a disk may not return all possible roots in $\zz_p$ for some power series due to a Hensel lifting problem.

\section{The algorithm}

In this section, we describe the algorithm we implemented to compute $X(\qq)_1$ for each $X$ a rank 1 Picard curve in our database. These curves come from Sutherland's database of genus 3 curves of small discriminant \cite{SutherlandNonHyp,g3database} and have discriminant $\Delta$ bounded by $10^{12}$. See \cite{SutherlandNonHyp} for a definition of the discriminant of a smooth plane curve; in particular, $p | \Delta$ if and only if $p$ is a prime of bad reduction. Each curve in the database is given by the data of an affine model $y^3 = f(x)$ with $f(x)$ degree four, along with its discriminant $\Delta$.

We began by filtering the database for those curves whose Jacobians have Mordell--Weil rank 1. We ran the Magma function $\tt RankBounds(f,3:ReturnGenerators:=true)$ which computes the rank based on work of Poonen and Schaefer \cite{PoonenSchaefer} and Creutz \cite{Creutz} and is implemented in Magma by Creutz. It gives a lower and upper bound for the rank of the Jacobian of the cyclic cover of $\pp^1$ defined by $y^3 = f(x)$ along with a list of polynomials $g(T)$ which define divisors $\Div(g)$ on $\pp^1$ that lift to rational divisors on $X$. The image of such a divisor in the Jacobian is either a $3$-torsion point or a point of infinite order.

If $\tt RankBounds$ returns upper and lower bounds that prove $J$ is rank 1, we proceed. Otherwise, we discard the curve and move on to the next one.\footnote{For practical purposes, we terminated RankBounds if it ran for 120 seconds without returning an answer. We discarded $520$ curves from the total 5335 curves in the database provided by Sutherland either from terminating RankBounds or due to a ``Runtime error''.} For $J$ rank 1, we consider the non-torsion generators $g(T)$; if there is some non-torsion $g(T)$ of degree one, then we have a rational point $P \in X(\qq)$ such that $[P - \infty]$ is infinite order in $J(\qq)$. Otherwise, we are left with $g(T)$ of higher degree, which correspond to rational divisors that are supported on Galois conjugate points defined over a number field $K$. To handle this second case, which comprises 544 of the 1403 Picard curves in our rank 1 database, we extended functionality in Balakrishnan and Tuitman's code to handle rational divisors on curves when running effective Chabauty computations. This is done by extending the Coleman integral functionality to compute integrals to points on curves defined over number fields when $p$ is totally split in the field.

\subsection{With a rational point whose class in the Jacobian is of infinite order}

\begin{alg}Computing $X(\qq_p)_1$ with $P \in X(\qq)$ such that $\text{[}P - \infty\text{]}$ is infinite order
\label{RatPoint} \\
\noindent Input: a Picard curve $X$ with affine equation $y^3= f(x)$ such that $f$ is monic, with discriminant $\Delta$, and Jacobian $J$ such that $J(\qq)$ is rank 1, along with a point $P\in X(\qq)$ such that $[P-\infty]\in J(\qq)$ has infinite order, and parameters $(N,e)$, where $p^N$ is the $p$-adic precision and $\qq_p(p^{1/e})$ is the totally ramified extension over which we work in the bad disks (see Section~\ref{explicitint}).\footnote{For some heuristics for choosing $e$ given a fixed $N$ see Appendix \ref{eheuristics}.} \\
\noindent Output: $S\subseteq X(\qq)$ and $T\subseteq X(\qq_p)_1 \setminus S$ or ``Failure''.
 If the algorithm does not return ``Failure'', and $T=\emptyset$, then $S=X(\qq)$.

\begin{enumerate}

\item Find the first\footnote{Due to a minor bug in the code of Balakrishnan and Tuitman for computing local coordinates at very infinite points, which in certain cases yields a Runtime Error instead of computing the coordinates, we sometimes choose the next largest possible prime.} prime $p>3$ such that $p \nmid \Delta$. (We discard $p=3$ because $3$ always divides the discriminant of a Picard curve \cite[Theorem~1.1]{PicardConductor}. We avoid $p=2$ for simplicity when Hensel lifting.)

\item\label{step: pointsearch} Compute a list of points $S\subseteq X(\qq)$ by searching in a box up to naive height $1000$.

\item Compute a basis $v_1, v_2$ of vanishing differentials $\Van(X(\qq))$ by computing a basis for the kernel of the matrix of integrals $[\int_{\infty}^P \omega_i]$ for $i = 1, 2, 3$ regular basis differentials. (We compute $v_1, v_2$ to finite precision: let $M$ be the matrix of Frobenius and $I$ the identity matrix. Then $v_1, v_2$ are computed to precision $N - \ord_p(\det(M-I)) -\delta$ as described in \cite[Section~4]{balatuitman} where $\delta$ is a precision loss defined in \cite[Definition~4.4]{Tuitman2}. If $  N-\ord_p(\det(M-I)) -\delta <0$, return ``Failure''.)

\item  \label{Integrate} Solve for $X(\qq_p)_1 = \{ Q \in X(\qq_p) | \int_\infty^Q v_i = 0, i= 1,2 \}$ using Algorithm \ref{SolveTiny}. If Algorithm \ref{SolveTiny} returns ``Failure'' (due to a bad choice of parameters $N$ and $e$), return ``Failure''.

\item Let $T \coloneqq X(\qq_p)_1\setminus S$.

\item  Run the Magma function $\tt PowerRelation$ to see if we can recognize the points of $T$ as defined over a number field. If any point $Q$ of $T$ is defined over $\qq$, append it to $S$ and remove it from $T$.

\begin{enumerate}[i.]
\item First check if $f(x(Q)) =0$; if so, $Q$ is a ramification point, and therefore $[Q-\infty]$ is 3-torsion. Remove $Q$ from $T$.

\item Compute $\int_\infty^Q \omega_i$, $i = 1, 2, 3$ on regular basis differentials. If these are $0$ to our $p$-adic precision, then $[Q- \infty]$ is potentially torsion. We verify it is torsion by finding $n \in \zz$ such that $n[Q- \infty] =0 \in J(\qq)$ using Jacobian arithmetic implemented in Magma. If we verify it is torsion, remove $Q$ from $T$.

\item Search in a box for relations with coefficients of small height between $[Q-\infty]$ and known rational divisors on $J(\qq)$. If we succeed, remove $Q$ from $T$. We have explained $Q \in X(\qq_p)_1$ by linearity.

\end{enumerate}

\item Return $S$ and $T$. If
$T=\emptyset$, then $S = X(\qq)$. If $T\not=\emptyset$, then more work must be done to explain
the presence of the remaining points in $T\subseteq X(\qq_p)_1 \setminus S$ and provably determine $X(\qq)$.

\end{enumerate}

\end{alg}

\subsection{Precision Bounds}
\label{prec}

In Algorithm \ref{RatPoint} Step \ref{Integrate} we solve for the zeros of the integral of a regular differential $v$ on $X$. To do this, we solve for the zeros in each residue disk of $\bar{Q} \in X(\FF_p)$, expanding the integral $\int_b^z v = \int_b^Q v + \int_{t(Q)}^{z(t)} v(t) \d t$ for some $Q \in X(\qq_p)$ reducing to $\bar{Q}$, and where $t$ is a uniformizing parameter at $Q$ computed using Algorithm \ref{tinyintegrals}. In practice, we must truncate the $p$-adic power series to be a polynomial with finite $p$-adic precision. Therefore we need to determine how much $p$-adic and $t$-adic precision we need to determine all of the zeros of $\int_b^Q v+\int_{t(Q)}^{z(t)} v(t) \d t$ on the residue disk of $\bar{Q}$.

First we define a {\em normalizing} process for $p$-adic power series.

\begin{alg}
\label{normalize}
Normalize a $p$-adic power series $f$
\\
\noindent Input: $f(t)  \in \qq_p[[t]]$, a $p$-adic power series such that $f\p(t) = \sum_{i\geq 0} a_i t^i \in \zz_p[[t]]$, and $f$ has constant term in $c \in \zz_p$. \\
\noindent Output: $F(x) \in \zz_p[[x]] \setminus p\zz_p[[x]]$ such that the roots of $f(t)$ are in bijection with those of $F$ by the map $r\mapsto pr$.
\begin{enumerate}
	\item Set $F_0(x) \coloneqq f(px)$. Then $F_0(x) = c+ \sum_{i\geq 1} \frac{a_{i-1} p^{i} x^i}{i}$.
	\item Set $\lambda \colonequals \min\left(\left\{ v_p\left(\frac{a_{i-1} p^{i}}{i}\right)\right\}_{i\geq0} \cup \{ v_p(c)\}\right)$.
	\item Return $F(x) \colonequals F_0(x) /p^\lambda$.
\end{enumerate}
\end{alg}

First, it is easy to see $F(x)$ is in $\zz_p[[x]]$. For $i>0$, the coefficient of $x^{i}$ is $\frac{a_{i-1} p^{i-\lambda}}{i}$, which has positive valuation for all $i>0$ (by assumption the constant term has nonnegative valuation).  In
Step 2, we ensure that $F(x) \in \zz_p[[x]] \setminus p \zz_p[[x]]$.
\begin{lem}
\label{finalprec}
Let $f(t) \in \qq_p[[t]]$ be a $p$-adic power series such that $f\p(t) \in \zz_p[[t]]$.  Write $f\p(t) =  a_0 + a_1t + a_2t^2 + \dots $ and $f(t) =c+ a_0 t+ a_1t^2/2 + a_2t^3/3 + \cdots $. Let $F(x)$ be the output of Algorithm~\ref{normalize} on input $f(t)$, so that
\[F(x) = \sum_{i=1}^\infty \frac{a_{i-1} x^{i} p^{i - \lambda} }{i} +c\] where we assume $c \in \zz_p$.

Fix a positive integer $N \in \nn$.
Define
\[
M(N,\lambda) \colonequals \min_m \{m- \lambda -\log_p (m) > N\}
\]
 and let $M=M(N,\lambda)$. Write $F(x) = F_M(x) + F_\infty(x)$ where $F_M(x)$ contains the terms $F$ of degree at most $M$, and $F_\infty(x)$ contains higher order terms.

Then $F_M(x) \equiv F(x) \mod p^{N}$.
\end{lem}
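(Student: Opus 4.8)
The plan is to prove the equivalent statement that the tail $F_\infty(x)$ has every coefficient divisible by $p^N$. Since $F = F_M + F_\infty$ and $F_M$, $F$ agree in all degrees $\leq M$, showing $F_\infty \equiv 0 \pmod{p^N}$ is exactly the assertion $F_M \equiv F \pmod{p^N}$; the constant term $c$ plays no role, as it sits inside $F_M$.

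First I would isolate, for each $i > M$, the coefficient of $x^i$ in $F$, which by the displayed formula is $\frac{a_{i-1} p^{i-\lambda}}{i}$, and compute its valuation
\[
v_p\!\left(\frac{a_{i-1} p^{i-\lambda}}{i}\right) = v_p(a_{i-1}) + (i - \lambda) - v_p(i).
\]
Because $f\p(t) \in \zz_p[[t]]$ by hypothesis, each $a_{i-1}$ lies in $\zz_p$, so $v_p(a_{i-1}) \geq 0$ and the first term may be dropped (if $a_{i-1}=0$ the coefficient is $0$ and the bound is trivial). For the last term I would invoke the elementary estimate $v_p(i) \leq \log_p(i)$, which follows from $p^{v_p(i)} \mid i$, hence $p^{v_p(i)} \leq i$. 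Combining these gives
\[
v_p\!\left(\frac{a_{i-1} p^{i-\lambda}}{i}\right) \geq i - \lambda - v_p(i) \geq i - \lambda - \log_p(i).
\]

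The remaining step—and the one I expect to be the main obstacle—is upgrading the definition of $M$ from a single threshold inequality to a bound valid for all $i > M$ simultaneously. Set $g(m) \colonequals m - \lambda - \log_p(m)$, so that $M = \min\{m : g(m) > N\}$ and in particular $g(M) > N$; the minimum exists since $g(m) \to \infty$. To conclude I need $g(i) > N$ for every $i > M$, which requires $g$ to be nondecreasing past $M$. Viewing $g$ as a function of a real variable, $g\p(m) = 1 - \frac{1}{m \ln p}$, which is positive for all $m \geq 1$ precisely because the primes used satisfy $p \geq 3$ (cf. Algorithm~\ref{RatPoint}, which takes $p > 3$), forcing $\ln p > 1$. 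Hence $g$ is strictly increasing on $[1,\infty)$, the set $\{m : g(m) > N\}$ is an upper ray of integers with least element $M$, and $g(i) > g(M) > N$ for every $i > M$. Since $\lambda \in \zz$ and $v_p(i) \in \zz$, the quantity $i - \lambda - v_p(i)$ is an integer bounded below by $g(i) > N$, so it is at least $N$, whence $v_p$ of the degree-$i$ coefficient is $\geq N$. As $i$ ranges over all degrees occurring in $F_\infty$, this shows $F_\infty \equiv 0 \pmod{p^N}$ and completes the proof.
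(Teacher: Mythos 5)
Your proof is correct and follows essentially the same route as the paper's: drop $v_p(a_{i-1}) \geq 0$, bound $v_p(i) \leq \log_p(i)$, and conclude that every coefficient of the tail has valuation exceeding $N$. The only difference is that you explicitly justify why $m - \lambda - \log_p(m) > N$ holds for \emph{all} $m > M$ and not just at the minimum $M$ itself (via monotonicity, using $p \geq 3$), a step the paper's proof asserts without comment; this is a worthwhile detail to make explicit, but it is the same argument.
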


\begin{proof}

Let $b_i \colonequals \frac{a_{i-1} p^{i- \lambda}}{i}$ be the coefficient of $x^{i }$.

To show $F_M(x) \equiv F(x) \mod p^{N}$, we show that for $i \geq M$, $v_p(b_i) > N$. Then $v_p(b_i) \geq v_p (p^{i-\lambda} /i) \geq i - \lambda -\log_p(i)$, and for $i \geq M$, this is greater than $ N$.
\end{proof}

Note that if $c \in \qq_p$ and $v_p(c) <0$, then $F(x)$ and therefore $f(t)$ has no zeros, so we will not need to consider this case.

\begin{rmk}
In order to work with the prime $p = 5 < 2g$, we provide a more detailed analysis of the precision here than in \cite[Section~3.3]{CCExp}. Their algorithm assumes $p > 2g$ in order to obtain explicit formulas for the $p$-adic and $t$-adic precision.
\end{rmk}

\begin{defn}
\label{satHensel}

Let $N$ be a positive integer $G \in \zz_p[[x]]$. Let $\bar{G}$ denote the reduction of $G$ to $\zz/p^N\zz$. We say $G$ \defi{satisfies Hensel's lemma} for $r \in \zz/p^N\zz$ if
\begin{itemize}
\item $\bar{G}(r) = 0$ modulo $p^N$ and
\item $\bar{G}'(r)^2 \neq 0$ modulo $p^N$.
\end{itemize}
\end{defn}

If $G$ satisfies Hensel's lemma for $r$ then by Hensel's Lemma \cite[Theorem~4.1]{Conrad} there exists a unique $\tilde{r} \in \zz_p$ such that $G(\tilde{r}) = 0$ and $|r- \tilde{r}|_p < |\bar{G}'(r)|_p$. Furthermore,
\begin{enumerate}
	\item $|r- \tilde{r}|_p = |\bar{G}(r)|_p/|\bar{G}'(r)|_p$ and \label{digits}
	\item $|\bar{G}'(\tilde{r})|_p = |\bar{G}'(r)|_p$. \label{derivativeval}
\end{enumerate}

\begin{lem}

\label{liftingworks}

Let $F(x)$ be a normalized power series, obtained as output from Algorithm \ref{normalize},
which converges on $\zz_p$. Fix $N$ a positive integer, $M = M(N,\lambda)$, and write $F(x) = F_M(x) +F_\infty(x)$ as in Lemma \ref{finalprec}.

Let $r \in \zz/p^N\zz$ such that $F(x)$ satisfies Hensel's lemma for $r$. Let $\tilde{r}\in\zz_p$ be the unique lift of $r$ satisfying $|r - \tilde{r}|_p < |F_M'(r)|_p.$

Then $\tilde{r} \equiv r \mod p^{N - v_p(F'_M(r))}$.
\end{lem}

\begin{proof}

Since $F$ satisfies Hensel's lemma for $r \in \zz/p^N\zz$, we have $F_M(r) = 0$ modulo $p^N$. Furthermore, by (\ref{digits}), letting $\tilde{r}$ be the unique lift of $r$ satisfying $|F'_M(r)|_p > |r - \tilde{r}|_p$, we have
 \[p^{-N+v_p(F'(r))} \geq |F_M(r)|_p/|F_ M'(r)|_p = |r - \tilde{r}|_p .\]
So $\tilde{r} =r$ modulo $p^{N-v_p(F'_M(r))}$.
\end{proof}

Every root of $F(x)$ reduces to a root of $F_M(x)$. Conversely, suppose we start with a set of roots $R$ in $\zz/p^N\zz$ satisfying Hensel's Lemma for $F_M$, so that for any $r, r\p \in R$, we have
\begin{equation}\label{systemofroots} r \neq r\p \mod p^{N- v_p(F_M'(r))}. \tag{$\dagger$}
\end{equation}
Then Lemma \ref{liftingworks} implies that the roots $r \in R$ lift uniquely to simple roots $\tilde{r}$ of $F$ in $\zz_p$ if $F_M$ satisfies Hensel's lemma for all $r \in R$.

We call a set $R$ with property (\ref{systemofroots}) a \defi{system of roots} for $F_M$ modulo $p^N$. We can compute a system of roots iteratively, lifting roots of $F_M$ modulo $p$ to higher precision, as described in in Algorithm \ref{iterativehensel}.

\begin{alg} Solving for the zeros of a tiny integral in a residue disk
\label{SolveTiny} \\

\noindent Input: $b \in X(\qq)$, $\omega \in \Van(X(\qq))$, $Q \in X(\qq_p)$ with uniformizer $t$ computed as in Algorithm \ref{tinyintegrals}, and a precision $N$. \\
\noindent Output: The zeros in the residue disk of $Q$ of the antiderivative of $\omega$ with constant term $\int_b^Q \omega$, or ``Failure''.

\begin{enumerate}

\item Compute $c=\int_b^Q \omega$ the constant term (if $Q$ is a bad point, $c = \int_b^S \omega + \int_S^Q \omega$ where $S \in X(\qq_p(p^{1/e}))$ is a point near the boundary of the residue disk of $Q$ with $e$ large enough so that the integral from $b$ to $S$ converges).

\item Compute $f\p(t) \colonequals \omega(t)$ to precision $N$ using Algorithm \ref{tinyintegrals}.

\item  Compute $f(t)$ the antiderivative of $f\p(t)$ with constant term $c$, accurate to precision $N\p = N - \delta$, where $\delta$ is the precision loss when integrating discussed in \cite[Section~4]{balatuitman}. If $N\p\leq 0$, return ``Failure''.

\item Compute $M(N\p,\lambda)$ and let $F_{M}(x)$ be as in Lemma \ref{finalprec}.

\item Compute a system of roots $R$ of $F_{M}(x)$ modulo $p^{N\p}$, as described in Algorithm \ref{iterativehensel}.

\item For each $r \in R$, check if $F_M'(r)^2 \neq 0$ modulo $p^{N\p}$.  If this is true for every $r$, these correspond uniquely to the zeros of $f(t)$, as per Lemmas \ref{finalprec} and \ref{liftingworks}. If not, return ``Failure''.

\end{enumerate}

\end{alg}

\begin{rmk}

If $f(t)$ has double roots, then $F_{M}(x)$ always has double roots. In Algorithm \ref{RatPoint}, we solve for common roots of two power series; we check the roots are single roots of at least one of the truncations. In the 1403 Picard curves in our database, at $N = 15$ we found common double roots for a single curve. By using a larger prime,
we were able to provably compute that curve's rational points.

\end{rmk}

\begin{rmk}
According to \cite[Remark~4.2]{balatuitman} the precision $M$ in Balakrishnan and Tuitman's implementation used to truncate $F(x)$ $t$-adically is chosen so that for all $i \geq M$, $v_p(b_i) \geq N$ for $b_i$ the coefficient of $x^{i}$. The chosen $M \geq M(\lambda, N\p)$ is sufficient.
\end{rmk}

\subsection{Beyond rational points}

If we cannot find $P \in X(\qq)$ such that $[P - \infty]$ is infinite order in $J(\qq)$, then we are must instead use a rational divisor on $X$ whose image is infinite order in $J(\qq)$. Because $X$ is a genus $3$ curve with a rational basepoint $\infty \in X(\qq)$, we have a surjection $\Sym^{3}(X) \twoheadrightarrow J$ \cite{MilneJacobian}. Because we assume the rank of $J(\qq)$ is $1$, such a divisor exists, and the points in the support of the divisor are defined over an extension of degree at most $3$.

Using our extension of Balakrishnan and Tuitman's code, it is possible to integrate over any rational divisor of the form $D - \deg(D) b $ for rational primes $p$ split completely in $L$, the smallest number field over which the points in the support of $D$ are defined, and also to points defined over a number field $K$ on a curve $X/\qq$ for $p$ completely split in $K$. The curve $X/\qq$ should be smooth projective and geometrically integral with (not necessarily smooth) affine model $Q(x,y)$ satisfying the same assumptions as in \cite[Assumption~2.6]{balatuitman}. However, we focus our discussion on the situation for superelliptic curves where it is practical to compute rank bounds. Our code is tailored to this case. For non-hyperelliptic superelliptic curves, when computing infinite order points in the Mordell--Weil group $J(\qq)$, $\tt RankBounds$ outputs polynomials\footnote{For a given curve there are often multiple choices of $g$, and we pick the one with the smallest first split prime.} $g(T)$ such that the divisor $\Div(g)$ on $\pp^1$ lifts to a degree $\deg(g(T))$ rational divisor on $X$ under the map $X \to \pp^1$. Then $\deg(g(T))$ is at most the genus of $X$, which is $3$ in our case of Picard curves. (For odd degree hyperelliptic curves $\tt RankBounds$ returns the Mumford representation of the divisor, and one can recover $g$ by taking the polynomial for the $x$-coordinate. Our code does not handle the even degree hyperelliptic case.)

Let $D$ be the degree $d=\deg(g(T))$ rational divisor obtained by lifting $\Div(g)$ to $X$. Then $D = P_1 + \dots + P_d - d \infty$. We use $L$ to denote the splitting field of $g$, which is the smallest number field over which the points $P_j$ are defined. To compute the Chabauty--Coleman set for $X$, we need to compute the set of vanishing differentials, which is done by taking a basis for the kernel of the $1 \times 3$ matrix of integrals of the regular differentials over $D$. The integral $\int_D \omega_i$ is equal to $\sum_{j=1}^d\int_\infty^{P_j}\omega_i$. By choosing a prime $p$ that splits completely in $L$, for any $\fp|p$ in $L$ we have $L_{\fp} \cong \qq_p$. Therefore, we can proceed as before and compute these integrals in $L_{\fp}$.

There are no theoretical innovations needed extend the code to use inert primes $p$ in $L$, but it would require extensive changes to the base code. To integrate using inert primes, one would need to implement the $q$-power Frobenius for residue fields $\ff_q$ where $q$ is a power of $p$. The code directly extending Balakrishnan and Tuitman's code for Coleman integrals and the code for batch computations of rational points and testing the algebricity of points and Jacobian relations are available on GitHub ~\cite{TravisSachiextensions, TravisSachiexperiments}.

\section{The data}

In this section we discuss the overall data and give in-depth explanations on examples of interest. For the rest of the section, we fix $\infty \in X(\qq)$ as our basepoint for the Abel--Jacobi embedding.

We computed the Chabauty--Coleman sets $X(\qq_p)_1$ of $1403$ rank $1$ Picard curves of discriminant bounded by $10^{12}$ and provably determined the set of rational points on these curves. In $859$ cases, we found a rational point $P \in X(\qq)$ whose class $[P - \infty]$ under the Abel--Jacobi map is infinite order. For the other $544$ curves, we used infinite order divisor classes not given by the image of a rational point under the Abel--Jacobi map.\footnote{For the remaining 544 curves, it can be quite computationally expensive to compute $X(\qq)$ depending on the parameters $N$, $e$, and $p$. Individual curves can require up to several hours. This computation was run on a single core of a $28$-core $2.2$ GHz Intel 2 Xeon Gold server with $256$GB RAM.}

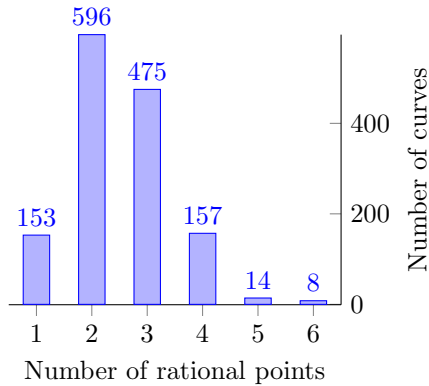
\begin{figure}[!h]
\begin{center}
\scalebox{1}{
\begin{tikzpicture}
  \begin{axis}[
    width = 6cm,
    ybar,
    axis y line*=right,
    axis x line*=bottom,
    enlarge y limits={upper, value=.4},
    enlarge y limits={lower, value=0.015},
    legend style={at={(0.5,-0.15)},
      anchor=north,legend columns=-1},
    ylabel={Number of curves},
    xlabel = {Number of rational points},
    symbolic x coords={1,2,3,4,5,6},
    xtick=data,
    nodes near coords,
    nodes near coords align={vertical},
    ]
\addplot
	coordinates {(1,153) (2,596) (3,475) (4,157) (5,14) (6,8) };
\end{axis}
\end{tikzpicture}
}
\caption{Graph of number of rational points}
\label{figure1}
\end{center}
\end{figure}

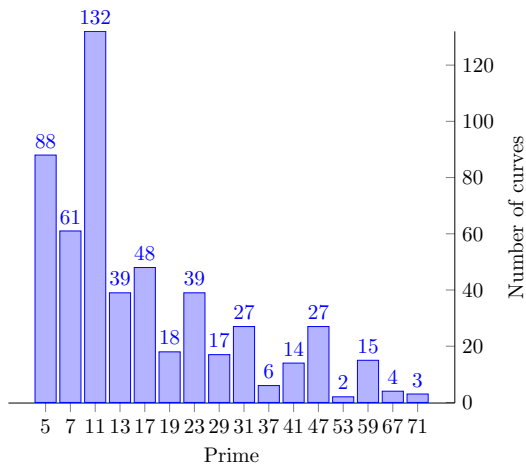
\begin{figure}[!h]
\begin{center}
\scalebox{.80}{
\begin{tikzpicture}
  \begin{axis}[
  	width = 9cm,
    ybar,
    axis y line*=right,
    axis x line*=bottom,
    enlarge y limits={upper, value=.4},
    enlarge y limits={lower, value=0.017},
    legend style={at={(0.5,-0.15)},
      anchor=north,legend columns=-1},
    ylabel={Number of curves},
    xlabel = {Prime},
    symbolic x coords={5, 7, 11, 13, 17, 19, 23, 29, 31, 37, 41, 47, 53, 59, 67, 71},
    xtick=data,
    nodes near coords,
    nodes near coords align={vertical},
    ]
\addplot
	coordinates {(5,88) (7,61) (11,132) (13,39) (17,48) (19,18) (23,39) (29, 17) (31,27)
	(37,6) (41,14) (47,27) (53,2) (59, 15) (67,4) (71,3) };
\end{axis}
\end{tikzpicture}}
\caption{Primes used to determine $X(\qq_p)_1$ for $\deg(g) > 1$}
\label{figure2}
\end{center}
\end{figure}

On all curves, the naive search for points of height at most $1000$ yielded all of the rational points. In Figure \ref{figure1} we display the number of Picard curves with $n$ rational points. Every Picard curve has exactly one point at infinity and it is rational; the curves in our database had at most six rational points. In Figure \ref{figure2} we show the primes used for the curves with divisors not given by the image of a rational point under the Abel--Jacobi map. These are the first split primes in the splitting field of $g$, where the divisor is the rational lift of $\Div(g)$, and $g$ is chosen from a list of possible $g$ returned by $\tt RankBounds$. For ease of reading, we leave out the primes used once: those primes are 43, 61, 83, and 107.

For all but $14$ curves, the only extra points in the Chabauty--Coleman set were the ramification points whose image in $J$ is 3-torsion. For six curves, we found points $P_{\operatorname{tor}} \in X(K)$ such that $[P_{\operatorname{tor}}-\infty] \in J(K)$ is a torsion point of higher order defined over a number field $K$. These torsion points were $9$-torsion and $4$-torsion; the corresponding points $P_{\operatorname{tor}}$ are all of the form $(a,b^{1/3})$ with $a, b \in \zz$.  For five curves, we found extra points $P_{\operatorname{lin}} \in X(K)$ defined over a number field $K$ such that there exists $n \in \zz$ so that $n[P_{\operatorname{lin}}-\infty]$ is infinite order in $J(\qq)$. For three curves, we found an extra point in the Chabauty--Coleman set whose presence is explained by an automorphism of the curve.

\subsection{Examples}
\label{examples}

For some Picard curves $X$ and choices of prime $p$, we found interesting global points in $X(\qq_p)_1$ that were not ramification points or rational points. We discuss four notable examples here.

\begin{eg}

Consider the curve $X: y^3 = x^4 + 6 x^3 - 48 x-64$ with discriminant $1289945088$. Then $J$ has the infinite order point $[(-3, -1) - \infty]$ so we compute the Chabauty--Coleman set at $p = 5$ and find an extra point $T=(t, 1/2 t^2 - 4)$ defined over $\qq [t] / (t^3 - 24 t - 48)$ (LMFDB \href{https://www.lmfdb.org/NumberField/3.1.108.1}{\textsf{3.1.108.1}}). However, if we instead compute the Chabauty--Coleman set at the prime $p = 17$ we find $X(\qq_{17})_1 = X(\qq) \cup W \cup \{ T, S\}$ where $W$ is the pair of $17$-adic ramification points. The point $S$ is a different point  $S = (s, s^2 + 6s + 8)$ defined over a different number field $\qq[s]/(s^3 + 9s^2 + 24 s + 24)$ (LMFDB \href{https://www.lmfdb.org/NumberField/3.1.216.1}{\textsf{3.1.216.1}}).

In the Jacobian of $X$, we have that $18 [T - \infty] = 9[S- \infty]$. Furthermore, relating this back to the known point of infinite order in $J(\qq)$, $[(-3, -1)- \infty]$, we can show that $18[T - \infty ] = 3 [(-3, -1)- \infty]$. So $S$ and $T$ appear in the Chabauty--Coleman set by linearity of the Coleman integral.
\end{eg}

\begin{eg}
The curve $X:y^3 = x^4 + 25x^3 - 78x^2 + 76x - 24$ with discriminant $411210307584$ has Mordell--Weil rank $1$ and possesses the infinite order point $D \in J(\qq)$ that is a lift of $\Div(x^2 - 6x + 4)$.

The first prime that splits completely in $\qq[x]/(x^2 - 6x + 4)$ is $11$, so we compute $X(\qq_{11})_1$. We obtain the rational points, one ramification point, as well as the point $(2, 32^{1/3})$, whose image in $J(\qq)$ under the Abel--Jacobi map is $9$-torsion.
\end{eg}

\begin{eg}
\label{endoexample1}
In their Chabauty--Coleman experiments on genus 3 hyperelliptic curves with Mordell--Weil rank 1, Balakrishnan--Bianchi--Cantoral-Farf\'an--{\c{C}}iperiani--Etropolski \cite{CCExp} find a novel reason for extra points in the Chabauty--Coleman set. They exhibit a curve with two extra points coming from the presence of extra endomorphisms of the Jacobian, which are not explained by linearity or torsion. Motivated by their example, we searched Sutherland's Picard curve database for similar examples and found several. This first example of a curve with an extra point in the Chabauty--Coleman set explained by the automorphism group of the curve comes from searching outside of our rank 1 database.

Let $X: y^3 =x^4-2$ be the Picard curve with discriminant $10319560704000000$. The Jacobian of this curve has rank 2 and has linearly independent non-torsion classes $[(2,3)- \infty]$ and $[(-2,3) - \infty]$ in $J(\qq)$.

The $\overline{\qq}$-automorphism group of $X$ is a group of order $48$ defined over the degree $12$ number field $L \colonequals \qq[x]/(x^{12}-6x^{11}+21x^{10}-50x^9+90x^8-126x^7+191x^6-276x^5-285x^4+950x^3-354x^2-156x+676)$ (LMFDB \href{https://www.lmfdb.org/NumberField/12.0.7652750400000000.2}{\textsf{12.0.7652750400000000.2}}). The group is isomorphic to $\SL_2(\ff_3) \rtimes_{\rho} C_2$ where the image of $\rho: C_2 \to \Aut(\SL_2(\ff_3))$ is given by conjugation by an element of order $2$. Let $p = 13$ be the first prime that splits completely in $L$. Computing the Chabauty--Coleman set at $p$, we find $X(\qq_{13})_1 = X(\qq) \cup W \cup T \cup A$ where $W$ is the set of ramification points, $T$ is the set of points whose image in $J(\qq)$ is torsion, and $A$ is a set of points that will be explained only by automorphisms of the curve, as shown in Table $\ref{pointsinXqp1}$.

\begin{table}
\begin{center}
  \begin{tabular}{ l p{0.8\textwidth} }
    \toprule
    Set & Points \\ \midrule
    $X(\qq) $ &  $(2,3), (-2,3), \infty$\\
    $W$  & Four Galois conjugates of $((5/2)^{1/4}, 0)$  \\
    $T$ & Twelve Galois conjugates of $((45/2)^{1/4}, 40^{1/3})$, Three Galois conjugates of $(0, (-5)^{1/3})$\\
    $A$ & Two Galois conjugates of $((-4)^{1/2}, 3)$ \\
    \bottomrule
  \end{tabular}
  \end{center}
  \caption{Points in $X(\qq_{13})_1$}
  \label{pointsinXqp1}
  \vspace{-.3in}
\end{table}
To verify that the points in $T$ give rise to torsion classes in the Jacobian, while those in $A$ do not, we compute the Coleman integrals $\int_\infty^P \omega_i$, $i = 1, 2, 3$ on regular differentials for each $P$ in $T$ and $A$ using the new functionality for integrating to points defined over number fields. These integrals are $0$ on points in $T$ to our $p$-adic precision, but not zero on points in $A$. We then algebraically check that for $P \in T$, the class $n[P - \infty]=0 \in J(\qq)$ for some $n \in \zz$ using Jacobian arithmetic in Magma. The image of each point in $T$ of the form $((45/2)^{1/4}, 40^{1/3})$ under the Abel--Jacobi map is a $12$-torsion point on the Jacobian, while those of the form $(0, (-5)^{1/3})$ are $4$-torsion points in $J$.

On the other hand, we show that the classes $A_1 \coloneqq[((-4)^{1/2}, 3)- \infty] $ and $A_2 \coloneqq [(-(-4)^{1/2}, 3)- \infty]$ do not have $\zz$-linear relations with $P_1 \coloneqq [(2,3) - \infty]$ and $P_2 \coloneqq [(-2,3) - \infty]$ in $J(\qq)$. Suppose $n A_1 = a P_1 + b P_2$ for some $n,a,b \in \zz$ with $n \neq 0$. Then $nA_1^c = a P_1 + bP_2$ where $c$ is complex conjugation. But $A_1^c = A_2$, and subtracting we would have $n(A_1 - A_2) = 0$ in $J(\qq)$. However, computing the Coleman integrals to $A_1$ minus the Coleman integrals to $A_2$ we get a nonzero value, so $A_1 - A_2$ cannot be torsion.

Therefore we need a new reason to explain the presence of the points in $A$ in the Chabauty--Coleman set. This reason will come from examining the automorphisms of $X$. When the vanishing differential is an eigenvector for a nontrivial automorphism of the curve, there is potential for extra points to appear in the Chabauty--Coleman set. Recall that $\omega_1 = y\d x/f, \omega_2 = xy\d x/f,$ and $\omega_3 = y^2 \d x/f$. Note $X$ has an order $4$ automorphism given by $\varphi: x \mapsto i x$ that acts by pullback on the differentials: $\varphi^* \omega_1 = i \omega_1, \varphi^* \omega_2 = - \omega_2, \varphi^* \omega_3 = i\omega_3$. The set of vanishing differentials $\Van(X(\qq))$ has dimension 1; let $v  \in \Van(X(\qq))$ be a nonzero element. We will show $v$ is an eigenvector for $\varphi^*$ with eigenvalue $i$.

First we will explain this phenomenon by showing why the points of $A$ are in the Chabauty--Coleman set, assuming $v$ is an eigenvector for $\varphi^*$ with eigenvalue $i$. It is enough to show that integrating $A_j$ against the vanishing differential $v$ is zero. Note that choosing square roots, $\varphi((2,3)) =((-4)^{1/2}, 3)$ and $\varphi((-2,3)) = (-(-4)^{1/2}, 3)$. Using change of variables and applying the fact that our vanishing differential is an eigenvector, we have
\[ \int_{A_1} v = \int_{\varphi(\infty)}^{\varphi((2,3))} v = \int_{P_1} \varphi^* v = i \int_{P_1} v =0\] and similarly for $A_2$.  Thus the points of $A$ are in the Chabauty--Coleman set because of $\varphi$.

Now we prove that $v$ is an eigenvector for $\varphi^*$. Write $v = A \omega_1 + B \omega_2 + C \omega_3$, with $A, B, C \in \qq_p$. The vanishing differential is defined (up to a scalar multiple) by the equation $\int_{P_j} v= 0$ for $j = 1, 2$. Acting by $\varphi^2$ we have \[0=\int_{P_2} v=\int_{\varphi^2(\infty)}^{\varphi^2((2,3))} v = \int_{P_1}(\varphi^*)^2 v = \int_{P_1} -A \omega_1 +B \omega_2 -C\omega_3 .\]
Adding this to the fact that $\int_{P_1} v = \int_{P_1} A\omega_1 +B \omega_2 +C \omega_3 =0$ we see that
\[2 \int_{P_1} B \omega_2 = 0.\]
We compute that the integral over $P_1$ of $\omega_2$ is not zero, so $B$ must be zero, and $v = A \omega_1 + C \omega_3$ is indeed an eigenvector for $\varphi^*$ with eigenvalue $i$.

\end{eg}

\begin{eg}
In addition to the previous example, we found three examples of extra points coming from automorphisms of the curve that arose during the computation of the Chabauty--Coleman sets in our rank 1 database. We describe one here. Let $X: y^3 = x^4 + 2x^3 + 6x^2 + 5x + 2$ be the Picard curve with discriminant $277826509467$ and Mordell--Weil rank $1$. There are no small affine rational points on this curve, so we use an infinite order point $D \in J(\qq)$ that is a lift of $\Div(x^2 + x - 1)$, and we compute the Chabauty--Coleman set for $X$ at $p = 11$, the first split prime in $\qq[x]/(x^2+ x -1)$. We obtain: \[X(\qq_{11})_1 = \{ \infty, (-1/2, \sqrt[3]{13/16})\}.\] Let $R \coloneqq(-1/2, \sqrt[3]{13/16})$. To check if $R$ is torsion, we compute the Coleman integrals on the regular basis differentials $\int_\infty^R \omega_i$, $i=1, 2, 3$, which are not all zero. The presence of $R$ in $X(\qq_{11})_1$ also cannot be explained by relations in $J(\qq)$: suppose $n[ R - \infty] = mD$ for some $n, m \in \zz \setminus \{ 0\}$. Let $\zeta_3$ be a primitive third root of unity. Acting pointwise by $\sigma: (x,y) \mapsto (x,\zeta_3 y)$ on each divisor, we have $nR^\sigma - n\infty^\sigma = mD^\sigma$. However, $mD^\sigma \in J(\qq)$, while the left hand side is not.

By computing the automorphism group of $X$ over $\qq[x]/(16x^3 - 13)$, we find $X$ has an order $2$ automorphism $\varphi$ sending $x \mapsto -x -1 $ and fixing $y$. The pullback $\varphi^*$ acts on Tuitman's basis of regular differentials by $\varphi^* \omega_1 = - \omega_1$, $\varphi^* \omega_2 = \omega_2$, and $\varphi^* \omega_3 = -\omega_3$. By a similar method used in Example \ref{endoexample1}, comparing the Coleman integral over $D$ against $v$, and the Coleman integral over $D$ against $\varphi^*v$, we can show that the vanishing differentials are of the form $v = A\omega_1 + C\omega_3$, and are eigenvectors for $\varphi^*$ with eigenvalue $-1$. Furthermore, $\varphi(R) = R$ and $\varphi(\infty) = \infty$ and therefore \[\int_\infty^R v = \int_{\varphi(\infty)}^{\varphi(R)} v = \int_\infty^R \varphi^* v = -\int_\infty^R v.\] So $2 \int_\infty^R v = 0$, showing $R \in X(\qq_{11})_1$.

\end{eg}

\section{Acknowledgments} We are very grateful to Jennifer Balakrishnan for suggesting this project and for numerous helpful discussions and suggestions. We are also grateful to Drew Sutherland for providing us with data, and to Alex Best and Borys Kadets for their generous help and advice. We are also very thankful to the anonymous referees for their helpful suggestions for improving the paper.

\appendix
\section{Precision heuristics}
\label{eheuristics}

Recall that, in the process of computing (non-tiny) Coleman integrals $\int_P^Q \omega$ where one endpoint $Q$ is in a bad disk, we must break the integral up into $\int_P^S\omega + \int_S^Q \omega$ such that $S$ is near the boundary of the disk. In other words, we find a point $S$ defined over a totally ramified extension $\qq_p(p^{1/e})$. Balakrishnan and Tuitman's code takes as an input this $e$, but it can be a balancing act to decide what value to enter: too small and the integral will not converge, but the larger $e$ is, the longer the runtime.

\begin{table}[hb]
\begin{center}
  \begin{tabularx}{315pt}{ l l l l l } \toprule
  Discriminant : Curve  & $5$  & $7$ & $11$ & $13$ \\
  \midrule
    $31492800:y^3=x^4 + 3x^3 - 3x + 1 $& bad   &$50$ & $85$ & $100$ \\
$47258883:y^3=x^4 + 2x^3 - x^2 - x$& $40$ & bad  & $85$ & $100$\\
$70858800:y^3=x^4 + 3x^3 - x^2 - 4x + 2$& bad &$ 50$& $85$ & $100$\\
$106288200:y^3=x^4 + x^3 - 66x^2 - 324x - 432$ & bad   & $30$& $50$& $55$ \\
$151322904:y^3=x^4 + 11x^3 - 32x^2 + 28x - 8$&  $30$  & $35$ & $50$ & $100$ \\
$212891328:y^3=x^4 + 5x^3 + 4x^2 - 5x + 1$& $10$ & $50$ & $85$ & bad\\
$105303439827:y^3=x^4 + 4x^3 + x^2 - 3x - 1$&$30$ & $50$ & $85$ & $95$ \\
$108095630841:y^3=x^4 + 4x^3 + 6x^2 - 9x$& $35$ & $40$ & bad & $55$\\
$113232992256:y^3=x^4 + x^3 - 4x^2 - 8x$& $30$ & $50$ & $80$ & $100$\\
$988868881152:y^3=x^4 + x^2 - 2x + 3$& $10$& $55$& $85$ & $100$\\
  			\bottomrule
  \end{tabularx}
  \caption{Table for $p = 5,7,11,13$, $N = 10$}
  \end{center}
\end{table}

\begin{table}
\begin{center}
  \begin{tabularx}{360pt}{ l l l l l l l l l }
      \toprule
  Curve & $23 $& $31$&$41$&$53$&$61$ &$71$&$83$&$97$ \\
      \midrule
    $y^3=x^4 + 3x^3 - 3x + 1$ &$180$ & $245$& $325$ &$420$&$485$&$565$&$660$& $770$\\
$y^3=x^4 + 2x^3 - x^2 - x$ & $180$ & $245$& $325$ &$420$&$485$&$565$&$660$& $770$\\
$y^3=x^4 + 3x^3 - x^2 - 4x + 2$&$180$ & $245$& $325$ &$420$&$485$&$565$&$660$& $770$\\
    $y^3=x^4 + x^3 - 66x^2 - 324x - 432$ & $95$ & $105$ & $155$ &$200$ & $205$ & $240$ &$280$&$325$ \\
    $y^3=x^4 + 11x^3 - 32x^2 + 28x - 8$ &$180$ & bad & $325$ &$420$&$485$&$565$&$660$& $770$\\
    $y^3=x^4 + 5x^3 + 4x^2 - 5x + 1$ &$180$ & $240$& $325$ &$420$&$485$&$565$&$660$& $770$\\
    $y^3=x^4 + 4x^3 + x^2 - 3x - 1$ &$180$ & $245$& $325$ &$420$&$485$&$565$&$660$& $770$\\
    $y^3=x^4 + 4x^3 + 6x^2 - 9x$  & $95$ & $120$ & bad  & $200$ & $230$ & $270$ & $315$ & $365$ \\
    $y^3=x^4 + x^3 - 4x^2 - 8x$ & $180$ & $245$ & $320$ &bad &$485$ &$565$ &$660$&$770$\\
    $y^3=x^4 + x^2 - 2x + 3$& $180$ &$245$ & $325$ &$420$& $485$ &$565$  &$660$  &$770$\\
    \bottomrule
  \end{tabularx}
\caption{Table for select $p\geq 23$, $N=10$}
\end{center}
\end{table}

This section gives tables of the smallest $e$ needed for $\tt effective\_chabauty$ rounded up to the nearest multiple of five, fixing $p$ and $N$, for $10$ arbitrarily chosen curves of rank $1$ with a rational point whose class in the Jacobian is of infinite order taken from Sutherland's database. Entries ``bad'' denote that a curve has bad reduction at that prime. For the tables, after small primes $p \leq 13$ we chose a sample of primes less than $100$ spaced roughly by $10$.

For every curve $X$ in the full database of $1403$ Picard curves, we also recorded the $e$ value used to compute $X(\qq_p)_1$ and to select a non-torsion divisor when $N = 15$. We initialized $e$ at $40$ and then incremented by $20$ until the integrals to compute $X(\qq_p)_1$ converged. These values can be found in the master list of Chabauty data on the 1403 Picard curves \texttt{masteralldata.txt} available in the repository \cite{TravisSachiexperiments}.

\section{Iterative Hensel lifting}
In this section we give an algorithm for computing approximate roots in $\zz_p$
of a polynomial with coefficients in $\zz_p$. Concretely, the elements of $\zz_p$
are represented by integers in $[0,p^N-1]$ for some positive integer $N$, the coefficient precision (here, $[a,b]$ is the set of all integers $k$ such that $a\leq k \leq b$).
Let $f\in \zz[x]$ be a polynomial whose coefficients are in $[0,p^N-1]$. We compute elements $r\in [0,p^N-1]$ such that
\begin{enumerate}
\item $f(r) = 0 \pmod{p^N}$;
\item there exists an integer $0\leq k_r\leq N$ such that $r\in [0,p^{k_r}-1]$ and
$f(r+p^{k_r}s)$ is identically zero in $\zz/p^N[s]$; \label{bestdigits}
\item if $r'\in[0,p^{k_r-1}-1]$ satisfies $r'=r\pmod{p^{k_r-1}}$ then $f(r'+p^{k_r-1}s)$ is not
identically zero in $\zz/p^N[s]$; and
\item if $2v_p(f'(r))<N$ then there exists $\tilde{r}\in \zz_p$ such that $f(\tilde{r})=0$ and
$|\tilde{r}-r|\leq p^{-k_r}$ (in other words, the approximation of $\tilde{r}$ by $r$
is correct to precision $k_r$). \label{Henselproperty}
\end{enumerate}
These properties follow in directly from Algorithm \ref{iterativehensel}, except (\ref{Henselproperty}) which is an application of Hensel's lemma (see Definition \ref{satHensel}).

From these properties, we see the algorithm computes a system of roots as described in Section~\ref{prec}, with the additional property (\ref{bestdigits}) which keeps track of the precision of the roots.  To
compute a system of roots, we inductively lift roots of $f$ mod $p^i$ to roots mod $p^{i+1}$ in what is
essentially a depth-first-search through the elements of $\zz/p^N$.
\begin{alg}\label{iterativehensel} Solve for a system of roots $R$ of $f \in \zz/p^N \zz[x]$. $\left.\right.$

\noindent Input: Prime $p$, precision $N$, and $f\in \zz/p^N\zz[x]$ such that $f \mod p$ is not
identically zero. \\
\noindent Output: A set $R$ of roots in $\zz/p^N\zz$ such that if $r, r\p \in R$, then $r \neq r\p \mod p^{N - v_p(f'(r))}$.
\\
\begin{enumerate}
\item Let $A = []$.
\item Let $B = \{a \in \ff_p: f(a) = 0 \mod p\}$ be the $\ff_p$-roots of $f$.
\item Update $A = [(b,1) : b \in B]$.
\item Let $Z = []$.
\item While $A$ is nonempty:
\begin{enumerate}
\item Let $(a,i)$ be the first element of $A$. Remove $(a,i)$ from $A$.
\item Let  $g_0(s) \colonequals f(a+p^is)$.
\item If $g_0(s) \neq 0 \pmod{p^N}$:
\begin{enumerate}
\item Let $v$ be the minimum valuation of the coefficients of $g_0$ and set
$g\colonequals g_0/p^v$.
\item Let $B$ be the roots of $g$ in $\ff_p$ and prepend $[(b,i+1) : b \in B]$
to $A$.
\end{enumerate}
\item Else:
\begin{enumerate}
\item Append $(a,i)$ to $Z$.
\end{enumerate}
\end{enumerate}
\item Return $Z$.
\end{enumerate}

\end{alg}

\bibliographystyle{alpha}

\newcommand{\etalchar}[1]{$^{#1}$}

\end{document}